\begin{document}

\newtheorem{theorem}{Theorem}    
\newtheorem{proposition}[theorem]{Proposition}
\newtheorem{conjecture}[theorem]{Conjecture}
\def\theconjecture{\unskip}
\newtheorem{corollary}[theorem]{Corollary}
\newtheorem{lemma}[theorem]{Lemma}
\newtheorem{sublemma}[theorem]{Sublemma}
\newtheorem{fact}[theorem]{Fact}
\newtheorem{observation}[theorem]{Observation}
\theoremstyle{definition}
\newtheorem{definition}{Definition}
\newtheorem{notation}[definition]{Notation}
\newtheorem{remark}[definition]{Remark}
\newtheorem{question}[definition]{Question}
\newtheorem{questions}[definition]{Questions}
\newtheorem{example}[definition]{Example}
\newtheorem{problem}[definition]{Problem}
\newtheorem{exercise}[definition]{Exercise}

\numberwithin{theorem}{section}
\numberwithin{definition}{section}
\numberwithin{equation}{section}

\def\reals{{\mathbb R}}
\def\torus{{\mathbb T}}
\def\integers{{\mathbb Z}}
\def\naturals{{\mathbb N}}
\def\complex{{\mathbb C}\/}
\def\distance{\operatorname{distance}\,}
\def\support{\operatorname{support}\,}
\def\dist{\operatorname{dist}\,}
\def\Span{\operatorname{span}\,}
\def\degree{\operatorname{degree}\,}
\def\dim{\operatorname{dim}\,}
\def\codim{\operatorname{codim}}
\def\trace{\operatorname{trace\,}}
\def\Span{\operatorname{span}\,}
\def\dimension{\operatorname{dimension}\,}
\def\codimension{\operatorname{codimension}\,}
\def\nullspace{\scriptk}
\def\kernel{\operatorname{Ker}}
\def\Re{\operatorname{Re\,} }
\def\Im{\operatorname{Im\,} }
\def\eps{\varepsilon}
\def\lt{L^2}
\def\diver{\operatorname{div}}
\def\curl{\operatorname{curl}}
\def\expect{\mathbb E}
\def\bull{$\bullet$\ }
\def\det{\operatorname{det}}
\def\Det{\operatorname{Det}}

\newcommand{\norm}[1]{ \|  #1 \|}
\newcommand{\Norm}[1]{ \Big\|  #1 \Big\| }
\newcommand{\set}[1]{ \left\{ #1 \right\} }
\def\one{{\mathbf 1}}
\newcommand{\modulo}[2]{[#1]_{#2}}
\newcommand{\abr}[1]{ \langle  #1 \rangle}

\def\bp{\mathbf p}
\def\bff{\mathbf f}
\def\bg{\mathbf g}

\def\scriptf{{\mathcal F}}
\def\scriptg{{\mathcal G}}
\def\scriptm{{\mathcal M}}
\def\scriptb{{\mathcal B}}
\def\scriptc{{\mathcal C}}
\def\scriptt{{\mathcal T}}
\def\scripti{{\mathcal I}}
\def\scripte{{\mathcal E}}
\def\scriptv{{\mathcal V}}
\def\scriptw{{\mathcal W}}
\def\scriptu{{\mathcal U}}
\def\scriptS{{\mathcal S}}
\def\scripta{{\mathcal A}}
\def\scriptr{{\mathcal R}}
\def\scripto{{\mathcal O}}
\def\scripth{{\mathcal H}}
\def\scriptd{{\mathcal D}}
\def\scriptl{{\mathcal L}}
\def\scriptn{{\mathcal N}}
\def\scriptp{{\mathcal P}}
\def\scriptk{{\mathcal K}}
\def\scriptP{{\mathcal P}}
\def\scriptj{{\mathcal J}}
\def\frakv{{\mathfrak V}}
\def\frakG{{\mathfrak G}}
\def\frakA{{\mathfrak A}}

\author{Marcos Charalambides}
\address{
        Marcos Charalambides\\
        Department of Mathematics\\
        University of California \\
        Berkeley, CA 94720-3840, USA}
\email{marcos@math.berkeley.edu}

\author{Michael Christ}
\address{
        Michael Christ\\
        Department of Mathematics\\
        University of California \\
        Berkeley, CA 94720-3840, USA}
\email{mchrist@math.berkeley.edu}
\thanks{Research of both authors supported in part by NSF grant DMS-0901569. The second
author was also supported by the Mathematical Sciences Research Institute.}


\date{December 13, 2011}

\title
[Near-extremizers of Young's inequality]
{Near-extremizers of Young's Inequality \\ For Discrete Groups}

\begin{abstract}
Those functions which nearly extremize Young's convolution inequality are characterized for
discrete groups which have no nontrivial finite subgroups.
Near-extremizers of the Hausdorff-Young inequality are characterized for $\integers^d$.
\end{abstract}

\maketitle

\section{Introduction}

Let $G$ be any discrete group. 
We denote the product of $x,y\in G$ by $x+y$, even though $G$ is not assumed to be Abelian. 
Equip $G$ with counting measure, so that $\norm{f}_{L^p(G)} =\big(\sum_{x\in G}|f(x)|^p \big)^{1/p} $
for any $p\in[1,\infty)$, while $\norm{f}_\infty = \sup_{x\in G}|f(x)|$.

Write
\[ \langle f_1*f_2,f_3\rangle = \sum_{x,y\in G} f_1(x)f_2(y)f_3(x+y).  \]
Young's convolution inequality states that for any discrete group,
\begin{equation} \label{young}
\Big| \langle f_1*f_2,f_3\rangle \Big| \le \prod_{j=1}^3\norm{f_j}_{p_j} \end{equation}
for complex-valued functions whenever each $p_j\in[1,\infty]$ and $\sum_j p_j^{-1}=2$.
Equivalently, $\norm{f_1*f_2}_q\le\norm{f_1}_{p_1}\norm{f_2}_{p_2}$
for $q^{-1}=p_1^{-1}+p_2^{-1}-1$ provided $q\in[1,\infty]$.

The optimal constants in these inequalities equal $1$, since equality is attained
if each $f_j$ is supported on a single point $z_j$, provided that $z_3=z_1+z_2$.
If $G=\integers^d$ for some $d\ge 1$,
then it is an elementary fact that so long as all $p_j$ belong to $(1,\infty)$,
equality is attained only if the support of each $f_j$ is a singleton, 
with the relation $z_3=z_1+z_2$ between their respective supports $\set{z_j}$.
This last assertion does not hold for general groups, for if $H$ is a finite
subgroup of $G$ and each function $f_j$ is the indicator function of 
$H$, then equality holds in \eqref{young}. 

Another relevant example is as follows.
Let $G=\integers$. Let $N$ be a large positive integer. Define $f_j^{(N)}$ to be the indicator
function of the interval $[-N,N]$ for $j=1,2,3$. 
Then $\prod_j\norm{f_j^{(N)}}_{p_j}=(2N+1)^2 = 4N^2+O(N)$ provided that $\sum_j p_j^{-1}=2$,
while $\langle f_1^{(N)}*f_2^{(N)},f_3^{(N)}\rangle = 3N^2+O(N)$.
Thus as $N\to \infty$,
\[ |\langle f_1^{(N)}*f_2^{(N)},f_3^{(N)}\rangle | 
\ge \Big(\frac34-O(N^{-1})\Big) \prod_{j=1}^3\norm{f_j^{(N)}}_{p_j}. \]
Yet any subsequence of $f_1^{(N)}/\norm{f_1^{(N)}}_{p_1}$, even modulo translations, converges weakly to $0$.
The main result of this paper implies that such a phenomenon cannot arise if
$\frac34$ is replaced by a constant sufficiently  close to $1$.

\begin{definition}
Let exponents $p_j\in(1,\infty)$ satisfy $\sum_{j=1}^3 p_j^{-1}=2$.
A triple of nonzero functions $(f_1,f_2,f_3)\in \times_{j=1}^3 L^{p_j}$
is a $(1-\delta)$--near extremizer for Young's inequality if
\begin{equation} |\langle f_1*f_2,f_3\rangle| \ge (1-\delta)\prod_{j=1}^3\norm{f_j}_{p_j}.  \end{equation}
\end{definition}

\begin{definition}
A group is said to be torsion-free if it has no finite subgroups except the trivial
subgroup of cardinality equal to one.
\end{definition}

\begin{theorem} \label{thm:main}
For any $d\ge 1$ and any 
ordered triple of exponents $p_j\in(1,\infty)$ satisfying $\sum_j p_j^{-1}=2$,
there exists a function $\delta:(0,1]\to(0,\infty)$
satisfying $\delta(\eps)\to 0$ as $\eps\to 0$ with the following property.
Let $G$ be any torsion-free discrete group. 
For any $\eps\in(0,1]$, if a triple $(f_1,f_2,f_3)$ is a
$(1-\delta(\eps))$--near extremizer for Young's inequality for $G$,
then there exist points $z_j\in G$ and scalars $c_j\in\complex$
such that for each index $j\in\set{1,2,3}$,
\begin{equation} \norm{f_j-c_j\one_{\set{z_j}}}_{p_j}<\eps\norm{f_j}_{p_j}.  \end{equation}
Moreover $z_3=z_1+z_2$.
\end{theorem}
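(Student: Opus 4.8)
The plan is to treat this as a stability (quantitative rigidity) statement and to prove it by tracking the equality conditions in the standard proof of Young's inequality \eqref{young}, converting ``near equality'' into rigid additive structure by means of a sumset lower bound that is available precisely because $G$ is torsion-free. I will first record the routine reductions. Normalizing $\norm{f_j}_{p_j}=1$, the pointwise bound
\[ |\langle f_1*f_2,f_3\rangle|\le \langle |f_1|*|f_2|,\,|f_3|\rangle \le \prod_{j=1}^3\norm{f_j}_{p_j} \]
shows that $(|f_1|,|f_2|,|f_3|)$ is again a near-extremizer, and moreover that the first inequality is itself nearly saturated. The latter forces the phases of $f_1(x)f_2(y)f_3(x+y)$ to be nearly aligned wherever this product carries a nonnegligible share of the form, so that after multiplying each $f_j$ by a unimodular constant we may assume, up to an error that can be absorbed into the final quantity $\eps\norm{f_j}_{p_j}$, that each $f_j\ge 0$. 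It therefore suffices to prove the conclusion for nonnegative $f_j$.

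Next I would make the equality analysis of Young's inequality quantitative. The inequality is proved by writing, on the constraint surface $\set{x_1+x_2+x_3=0}$,
\[ f_1(x_1)f_2(x_2)f_3(x_3)=\prod_{\set{i,j}} \big(f_i(x_i)^{p_i}f_j(x_j)^{p_j}\big)^{1/q_{ij}}, \qquad \tfrac{1}{q_{ij}}=1-\tfrac{1}{p_k}, \]
where $\set{i,j,k}=\set{1,2,3}$, and applying H\"older's inequality in the three conjugate exponents $q_{ij}$ (one checks $\sum_{\set{i,j}} q_{ij}^{-1}=1$ and that the exponent of each $f_j$ on the right is $1$). Each resulting factor evaluates to $\norm{f_i}_{p_i}^{p_i/q_{ij}}\norm{f_j}_{p_j}^{p_j/q_{ij}}$, and the product over the three pairs telescopes to $\prod_j\norm{f_j}_{p_j}$, giving \eqref{young}. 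A near-extremizer must therefore nearly saturate this single application of H\"older, which by the uniform-convexity (quantitative) form of H\"older's inequality forces the three functions $\Phi_{ij}(x_1,x_2,x_3)=f_i(x_i)^{p_i}f_j(x_j)^{p_j}$ to be \emph{nearly proportional} on the portion of the surface carrying most of the form. Comparing $\Phi_{12}$ with $\Phi_{13}$ and cancelling the common factor $f_1(x_1)^{p_1}$ yields $f_2(x_2)^{p_2}\approx\lambda\, f_3(-x_1-x_2)^{p_3}$; since the left-hand side does not depend on $x_1$, this says that $f_3$ is nearly constant along each translate of (a reflection of) $\support(f_1)$.

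The crux --- and the step where torsion-freeness enters and the interval example of the Introduction is defeated --- is to convert this ``constant along translates'' property into genuine concentration. Writing $A=\support(f_1)$ and $B=\support(f_2)$, the preceding paragraph shows $f_3$ is nearly equal to a single value $v$ on the sumset $A+B$, which in a torsion-free group obeys the Kneser--Kemperman bound $|A+B|\ge |A|+|B|-1$. Feeding this nearly-constant profile back into $\norm{f_3}_{p_3}=1$ with $p_3>1$ forces $v^{p_3}\lesssim (|A|+|B|-1)^{-1}$, so that if $A$ or $B$ fails to be essentially a singleton the relevant values of $f_3$, and hence the form itself, are forced to be small --- contradicting near-extremality. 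Running this dichotomy symmetrically in the three indices shows that each $\support(f_j)$ must carry all but an $\eps$-fraction of its $L^{p_j}$ mass on a single point $z_j$, which is exactly the assertion $\norm{f_j-c_j\one_{\set{z_j}}}_{p_j}<\eps\norm{f_j}_{p_j}$.

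Finally, once each $f_j$ lies within $\eps\norm{f_j}_{p_j}$ of $c_j\one_{\set{z_j}}$, the trilinear form equals $c_1c_2c_3\,\one[z_1+z_2=z_3]$ up to an $O(\eps)$ error; since the form is within $\delta$ of $\prod_j\norm{f_j}_{p_j}\approx\prod_j|c_j|$, the main term cannot vanish, which forces $z_3=z_1+z_2$. I expect the main obstacle to be the crux step of the third paragraph: quantifying \emph{uniformly across all scales} that spreading of the supports costs a definite fraction of the form. Mere compactness fails here (as the interval example shows that normalized near-extremizers can converge weakly to $0$), so the torsion-free sumset lower bound is the essential replacement, and the bookkeeping needed to turn the approximate proportionality of the $\Phi_{ij}$ into the stated $L^{p_j}$ estimates, with an explicit modulus $\delta(\eps)\to 0$, is where the real work lies.
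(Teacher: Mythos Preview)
Your reduction to nonnegative functions and the idea of reading off ``approximate proportionality'' from near-saturation of the H\"older step are sound, and so is the final paragraph deducing $z_3=z_1+z_2$. The crux step, however, contains a genuine gap.

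From $\Phi_{12}\approx\lambda\Phi_{13}$ you cancel $f_1(x_1)^{p_1}$ and obtain $f_2(x_2)^{p_2}\approx\lambda f_3(-x_1-x_2)^{p_3}$. This says that, for each fixed $x_2$, the quantity $f_3(-x_1-x_2)$ is essentially independent of $x_1\in A=\support(f_1)$; equivalently, $f_3$ is \emph{approximately translation-invariant under} $A-A$. It does \emph{not} say that $f_3$ is nearly equal to a single value $v$ on $A+B$: the right-hand side still depends on $x_2$ through $f_2(x_2)^{p_2}$, and you have not shown $f_2$ is constant on $B$. So the passage ``the preceding paragraph shows $f_3$ is nearly equal to a single value $v$ on the sumset $A+B$'' is unjustified. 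Moreover, even if one grants a constant profile, a single application of $|A+B|\ge |A|+|B|-1$ does not force the form to be small: if $f_j\sim N^{-1/p_j}\one_{[0,N)}$ then $v\sim N^{-1/p_3}$ and $\langle f_1*f_2,f_3\rangle\sim N^2\cdot N^{-1/p_1-1/p_2-1/p_3}=1$, so your counting does not distinguish $\delta$ small from $\delta=\tfrac14$. In other words, your argument as written would equally ``rule out'' the interval example, which it must not.

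What the paper does with the same raw information is different in an essential way. From approximate translation-invariance one gets a set $T$ (with $|T|\ge\scriptn(f_2,\eta,p_2)$, via Lemma~\ref{lemma:convexity}) such that $\norm{\tau_u f_1^{r_1}-f_1^{r_1}}_{s_1}$ is small for all $u\in T$; then one \emph{iterates}: for $u\in nT$ the same bound holds with a factor $n$, and by Kemperman $|nT|\ge n|T|-n+1$. Choosing $n\asymp\eta^{-1}$ and feeding this into a level-set inequality (furnished by the Lorentz bound of \S\ref{section:lorentz}, which guarantees a single ``heavy'' level set $E_\kappa$ with $2^\kappa|E_\kappa|^{1/p_1}\asymp\norm{f_1}_{p_1}$) forces $|\tilde E_\kappa|\gtrsim \eta^{-1}|T|$, hence $\scriptn(f_1,\eta,p_1)\ge 2\scriptn(f_2,\eta,p_2)$; the symmetric inequality then gives a contradiction unless each $\scriptn=1$. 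The amplification from $T$ to $nT$ and the level-set/Lorentz input are the missing ingredients in your sketch; a single use of $|A+B|\ge|A|+|B|-1$ cannot close the argument.
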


Equivalently, $\norm{f_j}_\infty \ge (1-\eps)\norm{f_j}_{p_j}$, for a different but comparable value of $\eps$.
A more quantitative formulation is as follows. 
\begin{theorem} \label{thm:quantitative}
Let $p_1,p_2,q\in (1,\infty)$ satisfy $q^{-1}=p_1^{-1} + p_2^{-1}-1$.
There exist a continous nondecreasing function $\Lambda:(0,1]\to(0,1]$,
an exponent $\gamma>0$, and a constant $c>0$ satisfying \[\Lambda(t) \le 1-c(1-t)^\gamma \text{ as } t\to 1^-\]
such that for any torsion-free discrete group $G$ and for any functions $f_i\in L^{p_i}(G)$,
\begin{equation}
\norm{f_1*f_2}_{q} \le \norm{f_1}_{p_1} \norm{f_2}_{p_2}
\cdot \Lambda\Big(\frac{\norm{f_1}_{\infty}}{\norm{f_1}_{p_1}}\Big)
\cdot \Lambda\Big(\frac{\norm{f_2}_{\infty}}{\norm{f_2}_{p_2}}\Big).
\end{equation} \end{theorem}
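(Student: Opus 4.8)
The plan is to reduce the desired product bound to a one-sided estimate, to establish that estimate in a ``global'' regime using Theorem~\ref{thm:main}, and to obtain the rate near $t=1$ by a direct expansion. Throughout, since $|f_1*f_2|\le |f_1|*|f_2|$ pointwise and $\norm{\,|f_j|\,}_r=\norm{f_j}_r$ for every $r$, I may assume $f_j\ge 0$. Write $t_j=\norm{f_j}_\infty/\norm{f_j}_{p_j}\in(0,1]$ and $R=\norm{f_1*f_2}_q/(\norm{f_1}_{p_1}\norm{f_2}_{p_2})\le 1$. The central claim is a one-sided Lemma: there is a continuous nondecreasing $\hat\Lambda:(0,1]\to(0,1]$ with $\hat\Lambda(1)=1$ and $\hat\Lambda(t)\le 1-c(1-t)$ as $t\to 1^-$, such that $R\le\hat\Lambda(t_1)$ for all admissible $f_1,f_2,G$. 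Granting this, the flip $\check f(x)=f(-x)$ satisfies $(f_1*f_2)\check{\ }=\check f_2*\check f_1$ and preserves all the relevant norms; running the Lemma with the exponents interchanged (and replacing $\hat\Lambda$ by the maximum of the two functions produced) gives also $R\le\hat\Lambda(t_2)$. Then $R\le\min(\hat\Lambda(t_1),\hat\Lambda(t_2))\le(\hat\Lambda(t_1)\hat\Lambda(t_2))^{1/2}$, so the theorem follows with $\Lambda=\hat\Lambda^{1/2}$, exponent $\gamma=1$, and a halved constant.

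For the one-sided Lemma I take $\hat\Lambda(t)=\sup\{R:t_1\le t\}$, which is nondecreasing and satisfies $R\le\hat\Lambda(t_1)$ tautologically. The global content is that $\hat\Lambda(t)<1$ for $t<1$, with a uniform gap, and this is where Theorem~\ref{thm:main} (hence torsion-freeness) enters. By duality, writing $F=f_1*f_2\ge 0$ and $p_3:=q'$ so that $p_1^{-1}+p_2^{-1}+p_3^{-1}=2$, a configuration with $R\ge 1-\delta$ provides a nonnegative $f_3$ with $\norm{f_3}_{q'}=1$ making $(f_1,f_2,f_3)$ a $(1-\delta')$--near extremizer of the trilinear form, with $\delta'\to 0$ as $\delta\to 0$. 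Theorem~\ref{thm:main} then forces $t_1\ge 1-\eps(\delta)$ with $\eps(\delta)\to 0$. Contrapositively, $t_1\le 1-\eta$ implies $R\le 1-\delta(\eta)$ for a single $\delta(\eta)>0$ uniform over all configurations; the finite-subgroup example shows this must fail without the torsion-free hypothesis. Hence $\hat\Lambda(t)\le 1-\delta(1-t)<1$ for $t<1$.

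It remains to show $\hat\Lambda(t)\le 1-c(1-t)$ for $t$ near $1$, i.e.\ to bound $R$ when $t_1$ is near $1$; normalize $\norm{f_j}_{p_j}=1$ and write $f_1=t_1\one_{\set{z}}+g_1$ with $g_1\ge 0$ supported off $z$ and $\sigma_1:=\norm{g_1}_{p_1}^{p_1}=1-t_1^{p_1}$ small. If $t_2\le 1-\eta_1$, the uniform gap already gives $R\le 1-\delta(\eta_1)\le 1-c(1-t_1)$ once $t_1$ is close enough to $1$. If instead $t_2>1-\eta_1$, write $f_2=t_2\one_{\set{w}}+g_2$ with $\sigma_2=1-t_2^{p_2}$ small and expand
\[ f_1*f_2 = t_1t_2\,\one_{\set{z+w}} + t_1\,g_2(\cdot-z) + t_2\,g_1(\cdot-w) + g_1*g_2. \]
The crucial point is that $z+w$ lies outside the supports of the two first-order terms, since $w\notin\support g_2$ and $z\notin\support g_1$; thus the value at $z+w$ is $t_1t_2+(g_1*g_2)(z+w)$, a second-order perturbation of $t_1t_2$. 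Bounding the remaining points by Minkowski's inequality and using $\norm{g_j}_q\le\norm{g_j}_{p_j}=\sigma_j^{1/p_j}$ (valid because $q>p_j$) together with $\norm{g_1*g_2}_q\le\sigma_1^{1/p_1}\sigma_2^{1/p_2}$, I obtain
\[ \norm{f_1*f_2}_q^q \le (t_1t_2)^q + C\big(\sigma_1^{1/p_1}+\sigma_2^{1/p_2}\big)^q + C\,\sigma_1^{1/p_1}\sigma_2^{1/p_2}. \]
Since $(t_1t_2)^q = 1-\tfrac{q}{p_1}\sigma_1-\tfrac{q}{p_2}\sigma_2+O(\sigma^2)$ while every correction term carries an exponent $q/p_j>1$ or the product exponent $1+q^{-1}>1$ and is therefore $o(\sigma_1+\sigma_2)$, the linear deficit survives: $\norm{f_1*f_2}_q^q\le 1-c(\sigma_1+\sigma_2)$, hence $R\le 1-c'(1-t_1)$. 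Notably this estimate requires no hypothesis on $G$.

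Combining the two regimes shows $\hat\Lambda$ is bounded strictly below $1$ on $(0,1)$ with a linear rate at $1$; after adjusting constants it is dominated by the explicit continuous nondecreasing majorant $t\mapsto 1-\min\big(c(1-t),\delta_*\big)$, where $\delta_*$ is the uniform gap, and replacing $\hat\Lambda$ by this majorant (then setting $\Lambda=\hat\Lambda^{1/2}$) completes the argument. The step I expect to be most delicate is the local estimate: one must guarantee that the first-order terms $t_1 g_2(\cdot-z)$ and $t_2 g_1(\cdot-w)$, whose supports may overlap each other and the tail $g_1*g_2$ arbitrarily, never feed back into the main peak and contribute only at order $o(1-t_1)$; this rests entirely on the support-disjointness of the peak from the linear terms and on the gain $q>\max(p_1,p_2)$, which renders all tail $\ell^q$--norms higher order. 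A secondary point requiring care is the passage from the bilinear ratio to a genuine trilinear near-extremizer, so that Theorem~\ref{thm:main} may legitimately be invoked.
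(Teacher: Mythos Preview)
Your argument is correct and takes a genuinely different route from the paper. The paper derives Theorem~\ref{thm:quantitative} by tracking quantitative dependence through the proof of Theorem~\ref{thm:main}: it observes that in the doubling step (Lemma~\ref{lemma:doubling}) the only constraint imposed on $\delta$ is $\delta\le C\eta^\rho$, which yields Proposition~5.3 and hence a power-law bound $1-t_i\le c\delta^\gamma$ with an exponent $\gamma$ determined implicitly by the chain of constants (uniform convexity, the $L^1$--reduction, Cauchy--Davenport). You instead use Theorem~\ref{thm:main} only as a black box to get the strict gap $\hat\Lambda(t)<1$ on $(0,1)$, and supply an independent, completely elementary local expansion---peeling off the $\ell^\infty$--peak and exploiting $q>\max(p_1,p_2)$ to make all tail terms $o(\sigma_1+\sigma_2)$---to obtain the rate near $t=1$. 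This buys you the sharp exponent $\gamma=1$, and cleanly isolates where torsion-freeness enters (only via the qualitative theorem; your local step uses nothing but discreteness). The paper's approach, by contrast, is more self-contained and makes explicit that the power-law relation between $\delta$ and $1-t_i$ holds globally rather than only asymptotically.

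Two small points of presentation: in the case $t_2\le 1-\eta_1$ of your local argument you invoke ``the uniform gap'' in the $t_2$ variable while still proving the one-sided lemma for $t_1$; this is legitimate because Theorem~\ref{thm:main} simultaneously controls all three functions, but it is worth saying so explicitly to avoid the appearance of circularity. And your remark that the cross term $\sigma_1^{1/p_1}\sigma_2^{1/p_2}$ carries ``the product exponent $1+q^{-1}>1$'' only treats the diagonal $\sigma_1=\sigma_2$; the uniform estimate follows from Young's inequality with any exponent $r\in(p_1,p_2')$, giving $\sigma_1^{1/p_1}\sigma_2^{1/p_2}\le C\eps^\alpha(\sigma_1+\sigma_2)$ for all $\sigma_j\le\eps$.
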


The analogue of Theorem~\ref{thm:main} for $\reals^d$, has been proved in \cite{christrealyoung}.
The simple method of proof developed here cannot apply to $\reals^d$. 
Nonetheless, the questions for $\reals$ and for $\integers$ are connected; the following reasoning shows
that if Theorem~\ref{thm:main} were not valid for $\integers$, then its analogue for $\reals$ would
also necessarily fail. 
Indeed, fix an extremizer $F\in L^p(\reals)$ for Young's inequality for $\reals$.
If a sequence of functions $f_\nu\in L^p(\integers)$ were to furnish a counterexample
to Theorem~\ref{thm:main} for $\integers$, then the sequence of functions
$g_\nu(x) = \sum_{n\in\integers} f_\nu(n) \nu^{1/p} F(\nu x)$ would provide a counterexample for $\reals$. 

The Hausdorff-Young inequality for $\integers^d$ states that
\begin{equation} \norm{\widehat{f}}_{L^{p'}(\torus^d)}\le \norm{f}_{\ell^p(\integers^d)} \end{equation}
for all $1\le p\le 2$, where $p'=p/(p-1)$,
$\torus^d = \reals^d/\integers^d$, equipped with the measure which pulls back
to Lebesgue measure on $\reals^d$ under the natural projection of $\reals^d$ onto $\torus^d$,
$\widehat{f}(\xi) = \sum_{x\in\integers^d} e^{-2\pi i \xi\cdot x}f(x)$,
and
$\norm{g}_{L^q(\torus^d)} = \big((2\pi)^{-d}\int_{\torus^d} |g(y)|^q\,dy \big)^{1/q}$.

Our results for Young's inequality have the following implication for the Hausdorff-Young inequality.
\begin{theorem}
Let $p\in(1,2)$. 
For any $\eps>0$ there exists $\delta>0$ such that for any dimension $d$
and any $(1-\delta)$--near extremizer $f\in L^p(\integers^d)$ for the Hausdorff-Young inequality,
there exists $z\in \integers^d$ such that $\norm{f}_{L^p(\integers^d\setminus\set{z})}<\eps\norm{f}_p$.
More precisely, there exist a continuous nondecreasing function $\Lambda:(0,1]\to(0,1]$,
an exponent $\gamma\in(0,\infty)$, 
and a constant $c>0$ satisfying \[\Lambda(t) \le 1-c(1-t)^\gamma \text{ as } t\to 1^-\]
such that for any $d\ge 1$ and for any function $f\in \ell^p(\integers^d)$,
\begin{equation}
\norm{\widehat{f}}_{p'} \le \norm{f}_p\cdot \Lambda\Big(\frac{\norm{f}_{\infty}}{\norm{f}_{p}}\Big).
\end{equation}\end{theorem}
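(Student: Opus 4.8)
The plan is to reduce the Hausdorff--Young bound to the quantitative Young inequality of Theorem~\ref{thm:quantitative} by means of the elementary identity $|\widehat f|^2=\widehat g$, where $g=f*\tilde f$ and $\tilde f(x)=\overline{f(-x)}$; note that $\widehat g\ge 0$ and that $\norm{g}_{2}=\norm{f*\tilde f}_{2}$. Since raising to the power $p'/2$ commutes with the $L^{p'/2}$ norm, one has the exact reformulation
\begin{equation}
\norm{\widehat f}_{p'}^2=\Norm{|\widehat f|^2}_{L^{p'/2}(\torus^d)}=\norm{\widehat g}_{L^{p'/2}(\torus^d)}.
\end{equation}
The whole problem is thus to bound $\norm{\widehat g}_{L^{p'/2}}$ by $\norm f_p^2$ times the correct gain. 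The crucial point is that the exponents align perfectly: the conjugate of $p'/2$ is $(p'/2)'=p'/(p'-2)$, and $(p'/2)'^{-1}=1-2/p'=2p^{-1}-1$, which is exactly the Young exponent for the convolution of two factors in $\ell^p$.

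First I would treat $p\in(1,\tfrac43]$, for which $p'/2\ge 2$. Here Hausdorff--Young applies to $g$ on the frequency side, giving $\norm{\widehat g}_{L^{p'/2}}\le \norm{g}_{(p'/2)'}$, and then Theorem~\ref{thm:quantitative} with $p_1=p_2=p$ and $q=(p'/2)'$ gives $\norm{f*\tilde f}_{(p'/2)'}\le \norm f_p\norm{\tilde f}_p\,\Lambda(\norm f_\infty/\norm f_p)\Lambda(\norm{\tilde f}_\infty/\norm{\tilde f}_p)$. Because $\norm{\tilde f}_r=\norm f_r$ for every $r$, this equals $\norm f_p^2\,\Lambda(\norm f_\infty/\norm f_p)^2$, and taking square roots yields
\begin{equation}
\norm{\widehat f}_{p'}\le \norm f_p\,\Lambda\Big(\frac{\norm f_\infty}{\norm f_p}\Big)
\end{equation}
with precisely the same $\Lambda$, $\gamma$, and $c$ as in Theorem~\ref{thm:quantitative}, and with no dependence on $d$. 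This already gives the theorem, in its exact quantitative form, for all $p\le\tfrac43$; the boundary case $p=\tfrac43$ is the one in which the inner step is simply Plancherel.

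The main obstacle is the complementary range $p\in(\tfrac43,2)$, where $p'/2<2$ and Hausdorff--Young is no longer available for the inner estimate. The tempting substitutes all fail to produce the sharp form. Monotonicity of norms on the probability space $\torus^d$ yields only $\norm{\widehat g}_{L^{p'/2}}\le\norm{\widehat g}_{L^2}=\norm{f*\tilde f}_2$, which after Young gives $\norm{\widehat f}_{p'}\le\norm f_{4/3}\Lambda(\norm f_\infty/\norm f_{4/3})$ with the wrong exponent $4/3$ in place of $p$; likewise interpolating $\norm{\widehat f}_{p'}$ between $L^2$ and $L^4$ introduces the factor $\norm f_2^{1-\mu}\norm f_{4/3}^{\mu}/\norm f_p\ge 1$, which runs against the desired inequality. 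That these substitutes cannot be repaired is not a mere technicality: for near-extremizers whose off-peak mass is spread over many sites (take $f=\one_{\{0\}}+a\sum_{i=1}^n\one_{\{y_i\}}$ with $na^p$ small, $a\to 0$, $n\to\infty$), one has $\norm f_{4/3}\to\infty$ while $\norm{\widehat f}_{p'}/\norm f_p$ stays near $1$, so any bound routed through $\norm f_{4/3}$ is hopeless. I therefore expect the range $p\in(\tfrac43,2)$ to demand a genuinely different idea — for instance a bootstrap from the already-established range $p\le\tfrac43$, or transfer of the $\reals^d$ result of \cite{christrealyoung} through the comparison between $\integers$ and $\reals$ sketched in the introduction — and this is the crux of the proof.

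Finally, for either range it remains only to promote the gain to a function $\Lambda_p\colon(0,1]\to(0,1]$ defined on all of $(0,1]$. When $\norm f_\infty/\norm f_p$ is bounded away from $1$, only a strict inequality $\Lambda_p(t)<1$ with no prescribed rate is claimed, and this follows from the qualitative characterization of near-extremizers together with a routine normalization-and-weak-limit compactness argument; near $t=1$ the sharp rate $1-c(1-t)^\gamma$ comes from the explicit bounds above. Taking the continuous nondecreasing majorant of the two regimes produces the required $\Lambda_p$, with constants independent of $d$ since no step in the argument refers to the dimension.
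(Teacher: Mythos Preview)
Your treatment of $p\in(1,\tfrac43]$ is correct and matches the paper's argument; the paper uses $f*f$ rather than your $f*\tilde f$, a cosmetic difference since $|(\widehat f)^2|=|\widehat f|^2$ pointwise, and the remaining steps are identical.

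For $p\in(\tfrac43,2)$ there is a genuine gap: you correctly diagnose why the naive routes fail and even guess that ``a bootstrap from the already-established range $p\le\tfrac43$'' should work, but you supply no argument. The paper carries out precisely this bootstrap, via a Stein-type complex interpolation that sidesteps the obstacle you flagged. Normalize $\norm{f}_p=1$, write $f=Fe^{i\varphi}$ with $F\ge0$, and form the analytic family $f_z=F^{L(z)}e^{i\varphi}$, where $L$ is complex-affine with $\Re L=p/2$ on the line $\Re z=0$, $\Re L=3p/4$ on $\Re z=1$, and $L(\theta)=1$ at the appropriate $\theta\in(0,1)$. On $\Re z=0$ one has $|f_z|=F^{p/2}$, so $\norm{\widehat{f_z}}_2=\norm{f_z}_2=1$ by Plancherel; on $\Re z=1$ one has $|f_z|=F^{3p/4}$, hence $\norm{f_z}_{4/3}=1$ and $\norm{f_z}_\infty=\norm{f}_\infty^{3p/4}$, so the already-proved case $p=4/3$ gives $\norm{\widehat{f_z}}_4\le\Lambda(\norm{f}_\infty^{3p/4})$. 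The three lines lemma then yields $\norm{\widehat f}_{p'}\le\Lambda(\norm{f}_\infty^{3p/4})^\theta$, which is of the required form. The point you missed is that by embedding $f$ in this analytic family one arranges $\norm{f_z}_{4/3}\equiv1$ on the right edge, so the troublesome factor $\norm{f}_2^{1-\mu}\norm{f}_{4/3}^{\mu}/\norm{f}_p$ never materializes; your objection applies to interpolating the operator with $f$ held fixed, not to interpolating the family $z\mapsto\widehat{f_z}$.
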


\section{Lorentz space bound, and a consequence} \label{section:lorentz}

We will work with triples of exponents $\bp=(p_j: 1\le j\le 3)$ which satisfy the two hypotheses
\begin{align} &\sum_j p_j^{-1}=2 \label{H1}
\\ &p_j\in(1,\infty) \ \text{ for all } j\in \set{1,2,3}.  \label{H2} \end{align}

Let $G$ be any discrete group.
Let $(f_j)_{1\le j\le 3}$ be a triple of functions with $f_j\in L^{p_j}$. 
If $|\langle f_1*f_2,f_3\rangle|\ge (1-\delta)\prod_{j=1}^3\norm{f_j}_{p_j}$,
then the same holds with each $f_j$ replaced by $|f_j|$. 
If $|f_j|$ satisfies the desired conclusion, then so does $f_j$.
Therefore it suffices to work only with nonnegative functions in the sequel.

Recall the Lorentz spaces $L^{p,r}$, where $p\in(1,\infty)$ and $r\in[1,\infty]$ \cite{steinweiss}.
If $f=\sum_{k\in\integers} 2^k F_k$
where $\one_{E_k}\le F_k\le 2\one_{E_k}$ and the sets $E_k$ are pairwise disjoint,
then the Lorentz norm $\norm{f}_{L^{p,r}}$ satisfies
\[ \norm{f}_{L^{p,r}} \asymp \Big(\sum_{k=-\infty}^\infty (2^k|E_k|^{1/p})^r\Big)^{1/r} \]
in the sense that one of these quantities is finite if and only if the other is
finite, and each is majorized by a constant multiple of the other, 
where these constants depend only on the indices $p,r$.
In particular, if $p<r<\infty$ then there exist $\eta\in(0,1)$ 
and $C<\infty$ such that for any $f\in L^{p,r}$,
\[ \norm{f}_{p,r} \le C\norm{f}_p^{1-\eta} \big(\sup_k 2^k|E_k|^{1/p}\big)^\eta.  \]

By real interpolation \cite{steinweiss}, for any $\bp$ satisfying \eqref{H1} and \eqref{H2},
there exist exponents $r_j>p_j$ such that
\begin{equation} \label{younglorentz}
\langle f_1*f_2,f_3\rangle 
\le C\prod_{j=1}^3\norm{f_j}_{L^{p_j,r_j}}
\end{equation}
where $C<\infty$ depends only on $\bp$.


By H\"older's inequality,
\eqref{younglorentz} implies that there exists $\eta=\eta(p_1,p_2,p_3)>0$
such that for all $\set{f_j}$,
\begin{equation} \label{younglorentz2}
\langle f_1*f_2,f_3\rangle 
\le C\prod_{j=1}^3\norm{f_j}_{L^{p_j}}^{1-\eta}
\prod_{j=1}^3 \sup_{k_j\in\integers} (2^{k_j}|E_{j,k_j}|^{1/p_j})^\eta.
\end{equation}
Therefore
\begin{lemma} \label{lemma:kappa}
Let $\bp$ satisfy \eqref{H1} and \eqref{H2}.
For any $\delta>0$ there exists $\rho>0$
such that for any nonnegative functions $f_j$ satisfying
$\langle f_1*f_2,f_3\rangle\ge\delta\prod_{j=1}^3 \norm{f_j}_{p_j}$,
for each $j\in\set{1,2,3}$ there exists $\kappa_j\in\integers$ such that
\begin{equation} \label{kappaproperty}
2^{\kappa_j}|E_{j,\kappa_j}|^{1/p_j} \ge \rho \norm{f_j}_{p_j}.  \end{equation}
\end{lemma}

\section{A consequence of a proof of Young's inequality} \label{section:review}

One proof of Young's inequality uses H\"older's inequality to reduce matters to
the simpler inequality $\norm{f*g}_p\le \norm{f}_1\norm{g}_p$. 
In this section we review this proof  in order to  extract additional information from it.


\begin{lemma} \label{lemma:L1reduction}
Let $G$ be any discrete group.
For any $\bp$ satisfying hypotheses \eqref{H1} and \eqref{H2},
there exist exponents $s_1,s_2\in (1,\infty)$ 
and a constant $C<\infty$
such that for any $\delta\in[0,1]$, if $\bff=(f_j: 1\le j\le 3)$ is any 
$(1-\delta)$--near extremizer for Young's inequality for $G$ with exponents $\bp$,
then $\bg=(|f_1|^{p_1/s_1},|f_2|^{p_2/s_2},|f_3|^{p_3})$
is a $(1-C\delta)$--near extremizer for Young's inequality with exponents $(s_1,s_2,1)$. 

Moreover, the same conclusion holds for any permutation of the three indices.
\end{lemma}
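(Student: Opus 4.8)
The plan is to exploit the standard proof of Young's inequality, in which one applies Hölder's inequality to split each factor so that the trilinear form is dominated by a product of $L^1 \times L^s \times L^{s'}$-type pieces, and then to run that argument in reverse to see what near-equality forces. Concretely, in the inequality $\sum_{x,y} f_1(x) f_2(y) f_3(x+y)$ I would write each $f_j$ as a product of powers adapted to the exponent pairing that reduces matters to $\|f*g\|_p \le \|f\|_1 \|g\|_p$. The point is that the reduction is itself a chain of pointwise Hölder applications, each of which is an inequality between the near-extremizing trilinear form and the corresponding form for the pushed-forward functions $g_j = |f_j|^{p_j/s_j}$ (with $s_3 = 1$). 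I would first fix the bookkeeping: determine the exponents $s_1, s_2$ so that $(s_1, s_2, 1)$ again satisfies the scaling condition $\sum s_j^{-1} = 2$, and verify $s_1, s_2 \in (1,\infty)$; this is forced by the arithmetic of \eqref{H1} together with $p_j > 1$.

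Next I would make explicit the Hölder step and track its deficit. Because we have reduced to nonnegative $f_j$ (as noted before Lemma~\ref{lemma:kappa}), the chain reads
\begin{equation}
\langle f_1 * f_2, f_3 \rangle \le \langle g_1 * g_2, g_3 \rangle^{\theta_0} \cdot (\text{Hölder factors})^{1-\theta_0},
\end{equation}
where the Hölder factors are exactly $\prod_j \|f_j\|_{p_j}$ raised to appropriate powers, and $\langle g_1 * g_2, g_3\rangle$ is bounded by $\prod_j \|g_j\|_{s_j}$ via the simpler $L^1$-based inequality. The key algebraic identity I must check is that $\|g_j\|_{s_j} = \|f_j\|_{p_j}^{p_j/s_j}$ and that the exponents combine so that a near-extremizing lower bound $(1-\delta)\prod_j\|f_j\|_{p_j}$ for the left side propagates to a lower bound $(1-C\delta)\prod_j \|g_j\|_{s_j}$ for $\langle g_1*g_2, g_3\rangle$. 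The linearization $1 - C\delta$ (rather than, say, $(1-\delta)^{1/\theta_0}$) is where the constant $C$ comes from, and it is legitimate precisely because $\delta \in [0,1]$ and the exponents are fixed: one uses $(1-\delta)^a \ge 1 - a\delta$ for $a \ge 1$ to absorb the reciprocal powers into a single multiplicative constant $C = C(\bp)$.

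The main obstacle I anticipate is the arithmetic of verifying that the original near-extremizing inequality, which is an equality-deficit statement about a \emph{product} of norms, transfers cleanly through Hölder to a deficit statement for the transformed triple, without losing more than a constant factor in $\delta$. In particular I must confirm that the Hölder factors on the right are \emph{exactly} the geometric mean of the norms appearing on the left, so that when the left side is within $(1-\delta)$ of its maximal value, no independent source of loss appears in the Hölder step itself; any slack there would degrade the conclusion. This requires choosing $s_1, s_2$ so that the exponent splitting is saturated, i.e. the Hölder application is tight in the regime of extremizers. The permutation statement at the end is then immediate by symmetry: the roles of the three indices enter the trilinear form \eqref{young} symmetrically (after relabeling via $z_3 = z_1 + z_2$), so the same computation with index $3$ replaced by any chosen index yields the corresponding reduction, with the same constant $C$ up to relabeling.
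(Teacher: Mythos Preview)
Your proposal is correct and follows essentially the same route as the paper: a single H\"older splitting of $f_1(x)f_2(y)f_3(x+y)$ with conjugate exponents $p_3$ and $q=p_3'$ yields $\langle f_1*f_2,f_3\rangle \le \langle g_1*g_2,g_3\rangle^{1/p_3}\,\norm{f_1}_{p_1}^{p_1/q}\norm{f_2}_{p_2}^{p_2/q}$, from which the near-extremality of $\bg$ follows with $C=p_3$ via $(1-\delta)^{p_3}\ge 1-p_3\delta$. The only difference is that the paper writes out the explicit exponents $r_j=(q-p_j)p_3/q$, $s_j=p_j/r_j$ and checks $s_1^{-1}+s_2^{-1}=1$ directly, whereas you leave this bookkeeping implicit.
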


\begin{proof}
Without loss of generality, we may assume that each function $f_j$ is nonnegative.
Let $q=p'_3$ be the exponent conjugate to $p_3$,
set $\theta = p_1/q\in (0,1)$ and $\phi =  p_2/q\in (0,1)$, and write the cubic form as
\begin{align*} \langle f_1*f_2,f_3\rangle
&= \sum_{x,y} f_1(x)f_2(y)f_3(x+y)
\\
&= \sum_{x,y} \Big(f_1(x)^{1-\theta} f_2(y)^{1-\phi} f_3(x+y)\Big) \cdot 
\Big(f_1(x)^{\theta}f_2(y)^{\phi}\Big)
\\
&\le \Big(\sum_{x,y} 
f_1(x)^{(1-\theta)p_3} f_2(y)^{(1-\phi)p_3}f_3(x+y)^{p_3}\Big)^{1/p_3} \ \cdot\ 
\Big(\sum_{x,y} f_1(x)^{p_1} f_2(y)^{p_2} \Big)^{1/q}
\\
&= \Big(\sum_{x,y} f_1(x)^{r_1} f_2(y)^{r_2}f_3(x+y)^{p_3}\Big)^{1/p_3} 
\norm{f_1}_{p_1}^{p_1/q} \norm{f_2}_{p_2}^{p_2/q}
\end{align*}
where $r_1=(q-p_1)p_3/q$ and $r_2=(q-p_2)p_3/q$.  Define $s_j=p_j/r_j$ for $j=1,2$, 
and $s_3=1$.
Then $f_3^{p_3}\in L^1$, while $g_j=f_j^{r_j}\in L^{s_j}$.  Moreover
\begin{multline*} \frac{1}{s_1}+\frac{1}{s_2} = \frac{r_1}{p_1}+\frac{r_2}{p_2}
= \frac{(q-p_1)p_3}{qp_1} +\frac{(q-p_2)p_3}{qp_2} \\
= p_3\Big( \frac{1}{p_1} -\frac{1}{q} +\frac{1}{p_2} -\frac{1}{q} \Big)
= p_3\Big( \frac{1}{p_1} +\frac{1}{p_2} -2 + \frac{2}{p_3} \Big)
= 1 \end{multline*}
since $\sum_{j=1}^3 \frac1{p_j}=2$.
Therefore the ordered triple $(s_1,s_2,1)$ satisfies \eqref{H1},
and Young's inequality with general exponents $\bp$ now follows directly from the special case
with exponents $(s_1,s_2,1)$. 

If $\bff$ is a $(1-\delta)$--near extremizer for the exponents $\bp$, then
\[
(1-\delta)\prod_{j=1}^3 \norm{f_j}_{p_j}
\le \langle f_1*f_2,f_3\rangle
\le \norm{f_1}_{p_1}^{p_1/q} \norm{f_2}_{p_2}^{p_2/q} 
\Big(\sum_{x,y} f_1(x)^{r_1} f_2(y)^{r_2}f_3(x+y)^{p_3}\Big)^{1/p_3}
\]
and therefore
\[ \sum_{x,y} f_1(x)^{r_1} f_2(y)^{r_2}f_3(x+y)^{p_3} \ge (1-\delta)^{p_3} 
\norm{f_1}_{p_1}^{r_1} \norm{f_2}_{p_2}^{r_2} \norm{f_3}_{p_3}^{p_3}.  \]
Equivalently,
$\langle g_1*g_2,g_3\rangle \ge (1-\delta)^{p_3} \prod_j\norm{g_j}_{s_j}$.  
\end{proof}


\section{A consequence of strict uniform convexity}

Extra information will later be derived from a study of near-extremizers
in the special case when exactly one of the three indices $p_j$ equals $1$ ---
paradoxically, a case in which our main conclusions do not hold. 
In this section we exploit the strict uniform convexity of the unit ball of $L^p$
for $p\in(1,\infty)$ to characterize near-extremizers $(f_j: j\in\integers)$ 
of the triangle inequality $\norm{\sum_j f_j}_p\le \sum_j \norm{f_j}_p$.

Among the well-known inequalities of Clarkson \cite{clarkson} are the following two.
\begin{lemma} [Clarkson's inequalities \cite{clarkson}]
Let $p\in(1,\infty)$ and set $q=p'=p/(p-1)$. Let $f,g\in\ell^p$. 
If $p\ge 2$ then
\begin{equation*}
\norm{f+g}_p^p+\norm{f-g}_p^p \le 2^{p-1}\norm{f}_p^p+2^{p-1}\norm{g}_p^p.
\end{equation*}
If $p\le 2$ then
\begin{equation*}
\norm{f+g}_p^q + \norm{f-g}_p^q
\le 2(\norm{f}_p^p+\norm{g}_p^p)^{q-1}. 
\end{equation*}
\end{lemma}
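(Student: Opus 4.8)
The plan is to prove each of the two inequalities by first establishing the corresponding \emph{scalar} inequality for a single pair of complex numbers and then lifting it to $\ell^p$. The case $p\ge 2$ is elementary, so I would dispose of it first. The goal there is the pointwise bound
\[ |a+b|^p+|a-b|^p \le 2^{p-1}\big(|a|^p+|b|^p\big) \qquad (a,b\in\complex). \]
To prove it, combine three observations: the parallelogram law $|a+b|^2+|a-b|^2=2(|a|^2+|b|^2)$; the superadditivity $u^{r}+v^{r}\le (u+v)^{r}$, valid for $u,v\ge 0$ and $r=p/2\ge 1$, applied to $u=|a+b|^2$ and $v=|a-b|^2$; and the convexity of $t\mapsto t^{p/2}$, which gives $(|a|^2+|b|^2)^{p/2}\le 2^{p/2-1}(|a|^p+|b|^p)$. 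Chaining these yields the displayed scalar inequality, and summing it over the index set produces the first Clarkson inequality at once, since every norm appearing there is just an $\ell^p$ norm raised to the power $p$.

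For $p\le 2$ I would aim for the scalar inequality
\[ |a+b|^q+|a-b|^q \le 2\big(|a|^p+|b|^p\big)^{q/p} \qquad (a,b\in\complex), \]
using $q-1=q/p$. Granting this, the lift proceeds through Minkowski's inequality in the two-dimensional space $\ell^{q/p}_2$, which is legitimate because $q/p\ge 1$ when $p\le 2$. Concretely, set $\alpha_n=|f(n)+g(n)|^p$, $\beta_n=|f(n)-g(n)|^p$ and $\gamma_n=|f(n)|^p+|g(n)|^p$, and regard each $(\alpha_n,\beta_n)$ as a vector in $\ell^{q/p}_2$. The triangle inequality in that norm bounds $\big\|\sum_n(\alpha_n,\beta_n)\big\|_{\ell^{q/p}_2}$ by $\sum_n\|(\alpha_n,\beta_n)\|_{\ell^{q/p}_2}$, while the scalar inequality controls each summand by $2^{p/q}\gamma_n$; raising the resulting estimate to the power $q/p$ recovers exactly $\norm{f+g}_p^q+\norm{f-g}_p^q\le 2(\norm{f}_p^p+\norm{g}_p^p)^{q-1}$.

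The main obstacle is the complex scalar inequality in the case $p\le 2$: after normalizing by homogeneity it reads $|1+z|^q+|1-z|^q\le 2(1+|z|^p)^{q/p}$ for $z\in\complex$, and unlike the $p\ge 2$ case it does not reduce painlessly to a one-line convexity argument, since the extremal configurations are the degenerate ones $z=\pm 1$ and the genuinely complex values must still be controlled. I would therefore be prepared to bypass it entirely by an interpolation argument using only trivial endpoint estimates. View the Hadamard transform $T(f,g)=(f+g,f-g)$ as a map between the mixed-norm spaces $\ell^{p}_2(\ell^p)\to \ell^{q}_2(\ell^p)$. At $(p,q)=(2,2)$ the parallelogram law gives $\norm{T}=\sqrt2=2^{1/q}$, and at $(p,q)=(1,\infty)$ the triangle inequality gives $\norm{T}\le 1=2^{1/q}$. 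Complex interpolation of mixed-norm (vector-valued) $L^p$ spaces then yields $\norm{T}_{\ell^{p}_2(\ell^p)\to\ell^{q}_2(\ell^p)}\le 2^{1/q}$ for every $p\in(1,2]$ with $q=p'$, the relation $q=p'$ emerging automatically because $1/p+1/q=1$ is preserved along the interpolation line; raising to the power $q$ is precisely the second Clarkson inequality. The same device, interpolating the $\ell^p_2\to\ell^p_2$ operator norm of $T$ between $p=2$ and $p=\infty$, reproves the $p\ge 2$ case as well, so the entire lemma can be made to rest on interpolation together with the two elementary endpoints.
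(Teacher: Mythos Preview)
The paper does not prove this lemma at all; it is stated with a citation to Clarkson's original 1936 paper and then used as a black box to deduce Corollary~\ref{cor:phew}. So there is no ``paper's own proof'' to compare against.

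Your argument is correct and self-contained. The $p\ge 2$ case is handled cleanly by the chain parallelogram~law $\to$ superadditivity of $t\mapsto t^{p/2}$ $\to$ convexity of $t\mapsto t^{p/2}$, giving the pointwise inequality which sums directly. For $p\le 2$, both of your routes are valid: the lift of the scalar inequality via the triangle inequality in $\ell^{q/p}_2$ (legitimate since $q/p=1/(p-1)\ge 1$) is the classical reduction, and your alternative interpolation argument---viewing $T(f,g)=(f+g,f-g)$ as a map $\ell^p_2(\ell^p)\to\ell^q_2(\ell^p)$ and interpolating between the endpoints $(p,q)=(1,\infty)$ and $(2,2)$---is essentially the well-known proof of Boas, and it has the virtue of bypassing the nontrivial scalar case entirely. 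The endpoint norms $\norm{T}=1$ and $\norm{T}=\sqrt{2}$ are immediate, the mixed-norm complex interpolation $[\ell^{p_0}_2(\ell^{p_0}),\ell^{p_1}_2(\ell^{p_1})]_\theta=\ell^{p_\theta}_2(\ell^{p_\theta})$ is standard, and the relation $1/p+1/q=1$ does indeed persist along the interpolation line, yielding the bound $2^{1/q}$ as claimed.

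One minor remark: in the first route for $p\le 2$ you state the complex scalar inequality but do not actually prove it, instead offering the interpolation argument as a substitute. That is fine for the purposes of the lemma, but if you wish to present the scalar-inequality route as a genuine alternative you would still owe a proof of $|1+z|^q+|1-z|^q\le 2(1+|z|^p)^{q/p}$ for $|z|\le 1$; this is the one step that requires real work (it is in Clarkson's paper, or can itself be obtained by two-point interpolation).
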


We will need the following direct consequence.
\begin{corollary} \label{cor:phew}
Let $p\in(1,\infty)$. Then there exist $C,\gamma\in\reals^+$ such that
for any $f,g\in L^p$ and any $\eps\in[0,1]$, if 
\begin{equation}
\norm{f+g}_p^p\ge (1-\eps) \big(2^{p-1}\norm{f}_p^p + 2^{p-1}\norm{g}_p^p\big)
\end{equation}
then
\begin{equation}
\norm{f-g}_p^p\le C\eps^\gamma \big(\norm{f}_p^p + \norm{g}_p^p\big).
\end{equation}
\end{corollary}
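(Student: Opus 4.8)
The plan is to deduce Corollary~\ref{cor:phew} from the relevant Clarkson inequality by a compactness-free, elementary argument that controls $\norm{f-g}_p$ in terms of the deficit in the parallelogram-type inequality. I would treat the two ranges $p\ge 2$ and $p\le 2$ separately, since the two Clarkson inequalities have different shapes, but in both cases the strategy is the same: the Clarkson inequality gives a bound on $\norm{f-g}_p$ by a quantity that degenerates precisely when $\norm{f+g}_p^p$ saturates its upper bound.

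For the case $p\ge 2$, the first Clarkson inequality reads
\[
\norm{f+g}_p^p + \norm{f-g}_p^p \le 2^{p-1}\norm{f}_p^p + 2^{p-1}\norm{g}_p^p.
\]
Under the hypothesis $\norm{f+g}_p^p \ge (1-\eps)\big(2^{p-1}\norm{f}_p^p + 2^{p-1}\norm{g}_p^p\big)$, I would simply rearrange to obtain
\[
\norm{f-g}_p^p \le 2^{p-1}\norm{f}_p^p + 2^{p-1}\norm{g}_p^p - \norm{f+g}_p^p \le \eps\,\big(2^{p-1}\norm{f}_p^p + 2^{p-1}\norm{g}_p^p\big),
\]
which already gives the conclusion with $\gamma=1$ and $C=2^{p-1}$. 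So the case $p\ge 2$ is essentially immediate and requires no convexity beyond Clarkson itself.

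The case $p\le 2$ is the one that demands care, and I expect it to be the main obstacle. Here the second Clarkson inequality reads $\norm{f+g}_p^q + \norm{f-g}_p^q \le 2\big(\norm{f}_p^p+\norm{g}_p^p\big)^{q-1}$ with $q=p'\ge 2$. The difficulty is that the right-hand side is phrased in terms of $\big(\norm{f}_p^p+\norm{g}_p^p\big)^{q-1}$ rather than in terms of $2^{p-1}\big(\norm{f}_p^p+\norm{g}_p^p\big)$, so the hypothesis and the conclusion are not directly comparable and I must convert between the $p$-th power scale and the $q$-th power scale. I would normalize so that $\norm{f}_p^p+\norm{g}_p^p=1$, write $A=\norm{f+g}_p$ and $B=\norm{f-g}_p$, and note the Clarkson bound becomes $A^q+B^q\le 2$. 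The hypothesis says $A^p\ge (1-\eps)2^{p-1}$, i.e. $A$ is close to its maximal value $2^{1/q}$ (the value forcing $B=0$). The analytic heart is then to show that if $A$ is within $\eps$ of $2^{1/q}$ on the $p$-power scale, then $B^p$ is bounded by $C\eps^\gamma$; this is a one-variable estimate quantifying how the constraint $A^q+B^q\le 2$ forces $B$ to be small when $A$ approaches $2^{1/q}$. Concretely, from $A^q\le 2-B^q$ and the near-maximality of $A$, I would derive $(1-\eps)^{q/p}2^{(p-1)q/p}\le A^q\le 2-B^q$; since $(p-1)q/p = q-1$ and $2^{q-1}\cdot\tfrac{1}{?}$ must be reconciled, I would expand $(1-\eps)^{q/p}=1-O(\eps)$ and use that $2\cdot(1-O(\eps)) \le 2-B^q$ yields $B^q\le O(\eps)$, hence $B^p\le O(\eps^{p/q})$, giving $\gamma=p/q\in(0,1)$. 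The care needed is in checking the exponent bookkeeping $(p-1)q=p(q-1)$ and in handling the constant $2^{q-1}$ versus $2$ correctly, which is exactly where the fractional power $\gamma$ is born; I would present this as a short convexity/Taylor estimate rather than a compactness argument, so that the constants $C,\gamma$ are explicit and depend only on $p$.
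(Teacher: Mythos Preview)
Your plan is correct. For $p\ge 2$ it coincides with the paper's argument: the first Clarkson inequality rearranges immediately to give $\gamma=1$. One small slip in your $p\le 2$ bookkeeping: the identity $(p-1)q/p=q-1$ (equivalently $(p-1)q=p(q-1)$) is false in general; the correct value is $(p-1)q/p=1$, since $q=p/(p-1)$. This is in fact exactly what you need, and it makes the computation cleaner than you anticipated: from $A^p\ge(1-\eps)2^{p-1}$ one gets $A^q\ge 2(1-\eps)^{q/p}$, hence $B^q\le 2\big(1-(1-\eps)^{q/p}\big)\le C\eps$ and $B^p\le C\eps^{p/q}$, so $\gamma=p/q=p-1$.

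For $p\le 2$ your route differs from the paper's. The paper argues pointwise: it establishes the scalar inequality $\big(|a+b|^2+c|a-b|^2\big)^{p/2}\le 2^{p-1}|a|^p+2^{p-1}|b|^p$, derives from it (by splitting into two cases according to the size of $|a+b|^p$ relative to $S=2^{p-1}|a|^p+2^{p-1}|b|^p$ and optimizing over the threshold) the pointwise bound $|a-b|^p\le C(S-|a+b|^p)^{p/(p+2)}S^{2/(p+2)}$, then sets $a=f(x)$, $b=g(x)$, integrates, and applies H\"older. This yields $\gamma=p/(p+2)$, which stays bounded away from $0$ as $p\downarrow 1$ (unlike your $\gamma=p-1$) but is worse than yours as $p\uparrow 2$. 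Your norm-level argument is shorter and is the literal ``direct consequence'' of the stated Clarkson inequalities; the paper's pointwise approach buys a uniformly nondegenerate exponent on $(1,2]$. For the use made of the corollary later in the paper either exponent suffices, since only the existence of some $\gamma>0$ is needed.
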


\begin{lemma} \label{lemma:convexity}
For any $p\in(1,\infty)$, there exist $C,\rho\in\reals^+$
with the following property. Let $S$ be a nonempty countable index set.
Let $\set{f_i: i\in S}$ be a set of $L^p$ functions,
at least one of which has positive norm.  If $\sum_{i\in S}\norm{f_i}_p<\infty$ and
\begin{equation}
\norm{\sum_{i\in S} f_i}_p \ge (1-\delta)\sum_{i\in S} \norm{f_i}_p, \end{equation}
then the set $S$ can be partitioned as a disjoint union $S =S'\cup S''$
so that the functions $f_i$ and their sum $F=\sum_{i\in S} f_i$  satisfy
\begin{align} \label{triangle1}
\sum_{i\in S''} \norm{f_i}_p &\le C\delta^\rho \sum_{i\in S} \norm{f_i}_p
\\
\label{triangle2}
\Norm{f_i-\frac{\norm{f_i}_p}{\norm{F}_p} F}_p &\le C\delta^\rho \norm{f_i}_p
\qquad \text{ for all } i\in S'.  \end{align}
\end{lemma}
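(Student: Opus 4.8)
The plan is to show that the ``misaligned'' indices carry a negligible fraction of the total norm; alignment of the remaining indices will then be automatic from the way the partition is defined. Writing $\hat g = g/\norm{g}_p$ for the normalization of a nonzero $g\in L^p$, observe that $\norm{f_i-\tfrac{\norm{f_i}_p}{\norm{F}_p}F}_p=\norm{f_i}_p\,\norm{\hat f_i-\hat F}_p$, so \eqref{triangle2} is exactly the assertion $\norm{\hat f_i-\hat F}_p\le C\delta^\rho$. After discarding the functions of zero norm and rescaling so that $\sum_{i\in S}\norm{f_i}_p=1$, we have $1-\delta\le\norm{F}_p\le 1$, and in particular $F\neq0$ once $\delta<1$. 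The case $\delta\ge\delta_0$ is trivial: take $S''=\emptyset$, so that \eqref{triangle1} is vacuous and \eqref{triangle2} holds because $\norm{\hat f_i-\hat F}_p\le2\le C\delta^\rho$ for a suitable constant $C$. So I fix a threshold $\tau=\delta^\rho$, put $S''=\set{i:\norm{\hat f_i-\hat F}_p>\tau}$ and $S'=S\setminus S''$, and assume $\delta$ small; then \eqref{triangle2} holds on $S'$ by construction, and everything reduces to the mass bound \eqref{triangle1}.

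The engine is a quantitative, unequal-weight triangle inequality derived from Corollary~\ref{cor:phew}. First, applying \ref{cor:phew} to a pair of unit vectors $u,v$ and rearranging yields a power-type modulus of convexity: there are $c>0$ and an exponent $m>0$, depending only on $p$, with
\begin{equation*}
1-\Norm{\tfrac{u+v}{2}}_p\ \ge\ c\,\norm{u-v}_p^{\,m}\qquad(\norm{u}_p=\norm{v}_p=1).
\end{equation*}
Next I upgrade this to arbitrary weights. For nonzero $A,B\in L^p$ the function $t\mapsto\norm{(1-t)\hat A+t\hat B}_p$ is convex on $[0,1]$ and equals $1$ at both endpoints, so its deficit below $1$ is concave and vanishes at the endpoints; evaluating at $t=\norm{A}_p/(\norm{A}_p+\norm{B}_p)$ and comparing with the midpoint gives
\begin{equation*}
\norm{A}_p+\norm{B}_p-\norm{A+B}_p\ \ge\ c'\,\min(\norm{A}_p,\norm{B}_p)\,\norm{\hat A-\hat B}_p^{\,m}.
\end{equation*}
Thus any cancellation in a two-term sum is quantitatively charged against the misalignment of the directions.

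To globalize, I would order $S=\set{1,2,\dots}$ by decreasing norm, treat a finite sum first, and telescope the two-vector inequality along the partial sums $F_k=\sum_{i\le k}f_i$ (with $F_0=0$): since $\sum_k(\norm{F_{k-1}}_p+\norm{f_k}_p-\norm{F_k}_p)=\sum_k\norm{f_k}_p-\norm{F}_p=1-\norm{F}_p\le\delta$, the strengthened inequality gives
\begin{equation*}
\delta\ \ge\ c'\sum_{k}\min(\norm{F_{k-1}}_p,\norm{f_k}_p)\,\norm{\widehat{F_{k-1}}-\hat f_k}_p^{\,m}.
\end{equation*}
Because the norms decrease and the family is a near-extremizer, once the partial sums have accumulated a fixed fraction of the total mass one has $\norm{F_{k-1}}_p$ bounded below by an absolute constant and $\widehat{F_{k-1}}$ within $O(\delta^{1/m})$ of $\hat F$ (the latter again from the two-vector inequality applied to the splitting $F=F_{k-1}+(F-F_{k-1})$, or directly when the tail is small). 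Feeding these back, so that an index of $S''$ is charged with weight comparable to $\norm{f_k}_p$ and misalignment at least $\tau/2$, bounds the misaligned mass by $\sum_{i\in S''}\norm{f_i}_p\le C\delta/\tau^{m}=C\delta^{1-m\rho}$, which equals $C\delta^\rho$ upon choosing $\rho=1/(m+1)$; the countable case then follows by monotone passage to the limit.

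The main obstacle is precisely this last step: converting the telescoped estimate, which controls the misalignment of each $f_k$ against the \emph{running partial sum} $\widehat{F_{k-1}}$ and carries the awkward weight $\min(\norm{F_{k-1}}_p,\norm{f_k}_p)$, into a clean bound on the misalignment against the \emph{total} direction $\hat F$ with weight $\norm{f_k}_p$. The difficulty concentrates at the two ends of the ordering --- the first few large terms, where $\norm{F_{k-1}}_p$ need not yet be comparable to $1$, and the long tail of tiny terms, whose aligned mass may be large and so cannot simply be dumped into $S''$. Handling these requires a short case analysis (splitting the indices at the point where the partial sums first reach half the total mass, and treating separately those $k$ with $\norm{f_k}_p$ below a $\delta$-dependent threshold), together with care in interchanging the limit with the sums. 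This is the ``tedious but routine'' bookkeeping alluded to elsewhere in the paper, and it is where essentially all the work lies.
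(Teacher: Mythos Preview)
Your approach differs substantially from the paper's, and the step you defer as ``tedious but routine bookkeeping'' is precisely what the paper's method bypasses. The paper does not telescope along partial sums at all. Instead it fixes a \emph{single} reference direction from the outset: normalize so that $\norm{F}_p=1$, choose a norming functional $\ell\in(L^p)^*$ with $\norm{\ell}=1$ and $\ell(F)=1$, and set $s_i=\ell(\hat f_i)\in[-1,1]$. Then $1=\ell(F)=\sum_i s_i\norm{f_i}_p$ while $\sum_i\norm{f_i}_p\le(1-\delta)^{-1}$, so $\sum_i(1-s_i)\norm{f_i}_p\le C\delta$ immediately --- a global deficit bound with no ordering, no partial sums, no moving target. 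A single application of Corollary~\ref{cor:phew} to the pair $(\hat f_i,\,s_iF)$ converts the scalar deficit $1-s_i$ into a bound $\norm{\hat f_i-s_iF}_p\le C(1-s_i)^\gamma$, and thresholding on this quantity (with the same optimization $\eta=\delta^{\gamma/(1+\gamma)}$ you arrive at) finishes the proof in a few lines.

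Your telescoping route is plausible and can almost certainly be completed, but the obstacle you identify is genuine, not cosmetic. You must show $\widehat{F_{k-1}}$ lies within $O(\delta^{1/m})$ of $\hat F$ uniformly over the indices that matter, and this forces a case split according to whether $\norm{F-F_{k-1}}_p$ is large (apply the two-vector inequality to $F=F_{k-1}+(F-F_{k-1})$) or small (then $F_{k-1}\approx F$ directly), together with separate handling of the initial indices where $\norm{F_{k-1}}_p$ is not yet bounded below. None of this is wrong, but as written your proposal stops short of a proof: the sentence ``this is where essentially all the work lies'' is accurate, and that work is not done. The dual-functional argument simply avoids it --- when many vectors must be compared to a common direction, a supporting hyperplane hands you that direction for free.
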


\begin{proof}
We may suppose without loss of generality that $\norm{F}_p=1$.
There exists a bounded linear functional $\ell\in (L^p)^*$
which satisfies $\norm{\ell}_{(L^p)^*}=1$ and $\ell(F)=1$.
By discarding all those indices for which $\norm{f_i}_p=0$, 
we may assume that $\norm{f_i}_p>0$ for every $i\in S$.
Set $F_i=f_i/\norm{f_i}_p$.  For each $i\in S$ decompose
\[ F_i = s_i F+r_i \]
where $s_i=\ell(F_i)\in[-1,1]$ and $r_i=F_i-\ell(F_i)F= F_i-s_i F$.

Now
\begin{equation} \label{ribound}
\norm{r_i}_p=\norm{F_i-s_iF}_p\le C(1-s_i)^\gamma
\end{equation}
where $C,\gamma\in\reals^+$ depend only on $p$.
For
\[ \norm{F_i+s_iF}_p\ge \ell(F_i+s_iF) =2s_i, \]
so 
\[ 2^{p-1}\norm{F_i}_p^p +2^{p-1}\norm{s_i F}_p^p -\norm{F_i+s_iF}_p^p
\le 2^{p-1}+2^{p-1}-(2|s_i|)^p =2^p(1-|s_i|).  \] 
\eqref{ribound} thus follows from Corollary~\ref{cor:phew} with 
$f=F_i$ and $g=s_i F$.

Also
\[ 1 = \ell(F) = \sum_{i\in S} \ell(f_i) = \sum_{i\in S} s_i\norm{f_i}_p \]
while
\[ \sum_{i\in S} \norm{f_i}_p\le(1-\delta)^{-1}\norm{\sum_{i\in S} f_i}_p 
=(1-\delta)^{-1}\norm{F}_p \le 1+C\delta, \]
so
\[ \sum_{i\in S} (1-s_i)\norm{f_i}_p\le C\delta.  \]
Therefore
\[ \sum_{i\in S} \norm{r_i}_p^{1/\gamma} \norm{f_i}_p\le C\delta, \]
where $C\in\reals^+$ is a constant which depends only on $p$.
Substituting $\norm{r_i}_p = \norm{f_i}_p^{-1} \norm{f_i-\ell(f_i)F}_p$ gives
\[ \sum_{i\in S} \Big(\frac{\norm{f_i-\ell(f_i)F}_p}{\norm{f_i}_p}\Big)^{1/\gamma} \norm{f_i}_p\le C\delta.  \]

Let $\eta>0$ be a quantity to be chosen below, and define
\[ S''=\set{i\in S: \norm{f_i-\ell(f_i)F}_p\ge \eta\norm{f_i}_p } \]
and of course $S'=S\setminus S''$.
Then 
\[ \sum_{i\in S''} \eta^{1/\gamma} \norm{f_i}_p\le C\delta, \]
so
\[ \sum_{i\in S''} \norm{f_i}_p\le C\delta \eta^{-1/\gamma}. \]

On the other hand,
for every $i\in S'$ we have
$\norm{f_i-\ell(f_i)F}_p< \eta\norm{f_i}_p$. 
Now
\[
\norm{f_i-\norm{f_i}_pF}_p
\le
\norm{f_i-\ell(f_i)F}_p + |\ell(f_i)-\norm{f_i}_p|
\]
The relation $F_i = s_i F+r_i$ can be rewritten as
$f_i = \ell(f_i)F + r_i\norm{f_i}_p$,
so
\[
\big|\norm{f_i}_p-|\ell(f_i)|\big|=\big|\norm{f_i}_p-\norm{\ell(f_i)F}_p\big|
\le \norm{r_i}_p\norm{f_i}_p =\norm{f_i-\ell(f_i)F}_p.
\]
If $i\in S'$ then, by definition of $S'$, this is $\le \eta\norm{f_i}_p$. Therefore $ \norm{f_i-\norm{f_i}_pF}_p\le 2\eta\norm{f_i}_p$.

Thus both conclusions \eqref{triangle1},\eqref{triangle2} of the lemma hold, 
with $\eps(\delta)=\max(2\eta, C\delta \eta^{-1/\gamma})$.
Choosing $\eta=\delta^{\gamma/(1+\gamma)}$ yields the required bounds.
\end{proof}

\section{Conclusion of proof}

All of the reasoning so far applies to any discrete group, with or without torsion.
The remainder of the analysis relies on the following generalization of the Cauchy-Davenport inequality
due to Kemperman \cite{kemp}:
For any two finite subsets $A,B$ of any torsion-free group,
\begin{equation} \label{CD} |A+B|\ge |A|+|B|-1.  \end{equation}
See also the alternative proof of Hamidoune \cite{hamidoune}.

The following quantity $\scriptn$ quantifies the extent to which a function fails to be concentrated on
any small set.
\begin{definition}
Let $p\in[1,\infty)$ and $\eta>0$.
For any nonzero $f\in L^p(G)$, $\scriptn(f,\eta,p)$ denotes
the largest integer $N$ such that for all subsets $B\subset G$,
\begin{equation} \label{Bimplication}
\norm{f}_{L^{p}(G\setminus B)}^{p}< \eta\norm{f}_{p}^{p} \ \Rightarrow\ |B|\ge N.
\end{equation} \end{definition}
For any nonzero function $f$, $\scriptn(f,\eta,p)$ is well-defined, and is $\ge 1$.
Moreover, $\scriptn(f,\eta,p)=1$ if and only if there exists $z$ such that
$\norm{f}_{L^p(G\setminus\set{z})}^p\le \eta \norm{f}_p^p$.

The main step in the proof of Theorem~\ref{thm:main} is encapsulated in the next lemma.
\begin{lemma} \label{lemma:doubling}
Let $\bp$ satisfy \eqref{H1} and \eqref{H2}.
For any $\eta>0$ there exists $\delta>0$ with the following property.
Let $G$ be any torsion-free discrete group.
Let $(f_1,f_2)\in (L^{p_1}\times L^{p_2})(G)$ be any $(1-\delta)$--near extremizer. 
If $\scriptn(f_2,\eta,p_2)\ge 2$  then
\begin{equation} \label{Aimplication}
\scriptn(f_1,\eta,p_1)\ge 2\scriptn(f_2,\eta,p_2).  \end{equation}
\end{lemma}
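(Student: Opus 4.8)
The plan is to argue by contradiction: assume $(f_1,f_2)$ is a near-extremizer with $M:=\scriptn(f_2,\eta,p_2)\ge 2$ but $\scriptn(f_1,\eta,p_1)\le 2M-1$, and show this is incompatible with near-extremality once $\delta$ is small relative to $\eta$. Before that I would record a structural simplification that makes the endpoint tools available. For any exponent $r$ and any $B\subset G$ one has $\norm{|f|^{r}}_{L^{p/r}(G\setminus B)}^{p/r}=\norm{f}_{L^{p}(G\setminus B)}^{p}$ and $\norm{|f|^{r}}_{p/r}^{p/r}=\norm{f}_{p}^{p}$, so the implication \eqref{Bimplication} defining $\scriptn$ is \emph{literally unchanged} by the power maps $f\mapsto|f|^{r}$ with exponents matched as in Lemma~\ref{lemma:L1reduction}. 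Applying that lemma in the permutation placing the exponent $1$ on the third slot, I may therefore replace $(f_1,f_2)$ by $(g_1,g_2)=(|f_1|^{r_1},|f_2|^{r_2})$, work with the cleaner endpoint inequality $\norm{g_1*g_2}_\infty\le\norm{g_1}_{s_1}\norm{g_2}_{s_2}$ where $s_1^{-1}+s_2^{-1}=1$, pay only the price $1-\delta\rightsquigarrow 1-C\delta$, and retain $\scriptn(g_1,\eta,s_1)=\scriptn(f_1,\eta,p_1)$ and $\scriptn(g_2,\eta,s_2)=\scriptn(f_2,\eta,p_2)$ exactly.

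In this reduced form, near-extremality reads $\langle g_1*g_2,g_3\rangle\ge(1-C\delta)\norm{g_1}_{s_1}\norm{g_2}_{s_2}\norm{g_3}_1$ with $g_3\ge 0$ in $L^1$. Since $(g_1*g_2)(z)\le\norm{g_1}_{s_1}\norm{g_2}_{s_2}$ pointwise by H\"older and $g_3\ge 0$, the bulk of the $L^1$-mass of $g_3$ must concentrate on the set $Z$ where $g_1*g_2$ is within a factor $1-o_\delta(1)$ of its maximum. At each such $z$ the H\"older inequality $\sum_x g_1(x)g_2(z-x)\le\norm{g_1}_{s_1}\norm{g_2}_{s_2}$ is nearly sharp, and I would convert this near-sharpness into a quantitative alignment using the uniform convexity packaged in Corollary~\ref{cor:phew}: the probability densities $\mu_1(x)=f_1(x)^{p_1}/\norm{f_1}_{p_1}^{p_1}$ and $x\mapsto\mu_2(z-x)$, with $\mu_2(y)=f_2(y)^{p_2}/\norm{f_2}_{p_2}^{p_2}$, are forced close in total variation, with an error $\omega(\delta)\to0$. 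In words: to be near-extremal, $f_1$ must replicate, up to a reflection and translation, the mass profile of $f_2$, and the convolution is thereby driven to live on a sumset of the essential supports of $f_1$ and $f_2$.

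This is where the torsion-free hypothesis and Kemperman's inequality \eqref{CD} supply the doubling. Writing $A,B$ for the essential supports of $f_1,f_2$ (so $|B|\asymp M$), the support of $g_1*g_2$ contains a definite portion of $A+B$, and \eqref{CD} gives $|A+B|\ge|A|+|B|-1$; applied with the reflected copy this forces a copy of $B-B$, of size $\ge 2M-1$ by \eqref{CD} since $|B|=M\ge 2$, to appear in the concentration geometry inherited by $f_1$. The strict gain in \eqref{CD}, namely $|A+B|>\max(|A|,|B|)$ precisely when $|B|\ge 2$, is exactly what upgrades the comparability coming from alignment into the lower bound $\scriptn(f_1,\eta,p_1)\ge 2M$, and is exactly why the hypothesis $M\ge 2$ is needed; in a group with a finite subgroup $H$ the same sumset could collapse ($|H+H|=|H|$) and the doubling would fail, matching the indicator-of-$H$ counterexample in the introduction.

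I expect the main obstacle to be making the final step uniform in the \emph{size} of $M$, so that a single $\delta=\delta(\eta)$ works for all $G$ and all $M$ simultaneously. The naive resolution, forcing the near-maximal plateau $Z$ to collapse to one point by iterating the alignment, only succeeds when $\delta\ll 1/M$, whereas $\delta$ must be fixed in advance from $\eta$ alone; reconciling the competing tendencies — near-maximality pulling toward a small plateau, Kemperman pushing the convolution to spread over $\ge 2M-1$ points — without any dependence on $M$ is the quantitative heart of the matter. A second difficulty is that the crude truncation of $f_j$ to its capturing set loses a factor $\eta^{1/p_j}$ that is not small for fixed $\eta$; to avoid this, the alignment and sumset estimates should be run not on the raw capturing sets but on the dominant dyadic level sets $E_{j,\kappa_j}$ furnished by Lemma~\ref{lemma:kappa}, where the mass is genuinely comparable to the supremum and the arguments can be made essentially loss-free. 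Propagating the errors $\omega(\delta)$ coherently through the power maps, through H\"older, and through \eqref{CD} is then the remaining bookkeeping.
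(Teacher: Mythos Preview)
Your reduction via Lemma~\ref{lemma:L1reduction} and the invariance $\scriptn(|f|^r,\eta,p/r)=\scriptn(f,\eta,p)$ are fine, and you correctly diagnose the two genuine difficulties (uniformity in $M$, and working on the dominant level sets $E_{j,\kappa_j}$). But the mechanism you propose for the doubling itself does not work, and the paper's proof proceeds along a different line.

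With your permutation (the $L^1$ exponent on the third slot), near-saturation of H\"older at a plateau point $z$ gives $\mu_1\approx\mu_2(z-\cdot)$ in total variation. This says the essential support of $f_1$ is, up to small error, a \emph{single} reflected translate $z-B$ of the essential support $B$ of $f_2$; it yields $\scriptn(f_1,\eta',p_1)\approx\scriptn(f_2,\eta',p_2)$, not a doubling. Your assertion that ``a copy of $B-B$ \dots\ appears in the concentration geometry inherited by $f_1$'' is not justified: $B-B$ would arise only from combining alignments at \emph{distinct} plateau points $z_1\ne z_2$, and those alignments force $\mu_2(z_1-\cdot)\approx\mu_2(z_2-\cdot)$, i.e.\ almost-periods of $f_2$, not spreading of $f_1$. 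Moreover nothing in your setup bounds $|Z|$ from below (for the true extremizer $|Z|=1$), so there is no source for a second alignment. This is exactly the uniformity-in-$M$ obstruction you flagged, and it is fatal to this route.

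The paper instead applies Lemma~\ref{lemma:L1reduction} with the $L^1$ exponent on the \emph{second} slot. Writing $g_1=f_1^{r_1}\in L^{s_1}$ and $g_2=f_2^{p_2}\in L^1$, near-extremality becomes near-saturation of the \emph{triangle inequality}
\[
\Big\|\sum_{u\in G} f_2^{p_2}(u)\,\tau_u f_1^{r_1}\Big\|_{s_1}\ \ge\ (1-C\delta)\,\|f_1^{r_1}\|_{s_1}\,\|f_2^{p_2}\|_1,
\]
to which Lemma~\ref{lemma:convexity} applies directly. This produces a set $S'$ with $\sum_{u\notin S'}f_2^{p_2}(u)<\eta\|f_2\|_{p_2}^{p_2}$, hence $|S'|\ge N=\scriptn(f_2,\eta,p_2)$, on which all translates $\tau_u f_1^{r_1}$ are within $\eta$ of a common function; translating, one obtains a set $T$ with $|T|\ge N$ of \emph{almost-periods} of $f_1^{r_1}$. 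The doubling then comes not from a single application of \eqref{CD} but from \emph{iterating}: for $u\in nT$ one has $\|\tau_u f_1^{r_1}-f_1^{r_1}\|_{s_1}\le 2n\eta\|f_1^{r_1}\|_{s_1}$, while Kemperman gives $|nT|\ge nN-n+1\ge\tfrac12 nN$ (this is where $N\ge 2$ enters). Choosing $n\asymp\eta^{-1}$ keeps the almost-period error bounded and forces $|nT|\gtrsim\eta^{-1}N$. A pigeonhole on the dominant level set $\tilde E_\kappa$ from Lemma~\ref{lemma:kappa} then shows $|\tilde E_\kappa|\gtrsim\eta^{-1}N$, which for small $\eta$ gives $\scriptn(f_1,\eta,p_1)\ge 2N$. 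The iteration step is precisely what makes $\delta$ depend only on $\eta$ and not on $M$; your outline has no analogue of it.
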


\begin{proof} 
Let $C,\rho$ be the constants which appear in Lemma~\ref{lemma:convexity}
for the exponent $s_1$, and suppose henceforth that $\delta$ is sufficiently small that $C\delta^\rho<\eta$.
Set $N=\scriptn(f_2,\eta,p_2)$.

For $u\in G$, let $\tau_u g(x)=g(x-u)$ denote the right translate of a function $g:G\to\complex$.
Since $(f_1,f_2)$ is a $(1-\delta)$--near extremizing pair, Lemma~\ref{lemma:L1reduction} says that
\[ \norm{\sum_{u\in G} f_2^{p_2}(u)\tau_u f_1^{r_1}}_{s_1}
\ge (1-C_0\delta)\norm{f_1}_{p_1}^{r_1}\norm{f_2}_{p_2}^{p_2}, \]
where $s_j,r_j$ are the exponents which appear in that lemma and the constant $C_0$ depends on $p_1,p_2$ alone.

Apply Lemma~\ref{lemma:convexity} 
to the collection of functions $x\to \tau_u f_1^{r_1}(x)f_2^{p_2}(u)$ indexed by $u\in G$. 
Since $C\delta^\rho<\eta$, the lemma gives
a function $\scriptf\in L^{s_1}$ and a partition $G=S'\cup S''$ of $G$ such that 
\begin{gather} \label{app1} \sum_{u\in S''} f_2^{p_2}(u) <\eta \norm{f_2}_{p_2}^{p_2}
\\
\intertext{and} 
\label{app2}
\norm{\tau_u f_1^{r_1}-\scriptf}_{s_1}< \eta \norm{f_1}_{p_1}^{r_1}
\text{ for all $u\in S'$.}
\end{gather}
According to the definition \eqref{Bimplication} of $N=\scriptn(f_2,\eta,p_2)$, 
\eqref{app1} implies that $|S'|\ge N$.
Inequality \eqref{app2} implies that there exists a set $T$, which is a translate of $S'$, such that
\begin{equation} \norm{\tau_u f_1^{r_1}-f_1^{r_1}}_{s_1}\le 2\eta\norm{f_1}_{p_1}^{r_1} 
\ \text{ for all $u\in T$.} \end{equation}
In particular, $|T|=|S'|\ge N$.

Let $nT$ denote the set 
\[nT=\set{t_1+t_2+\cdots t_n: t_j\in T \text{ for all } j\in[1,n]}\]
of all $n$-fold sums of elements of $T$.
If $u\in nT$ then 
\[\norm{\tau_u f_1^{r_1}-f_1^{r_1}}_{s_1}\le 2n\eta\norm{f_1}_{p_1}^{r_1}.\]
Moreover, $|nT|\ge nN-n+1$ by the Cauchy-Davenport  inequality \eqref{CD}.
Since $N\ge 2$, $|nT|\ge \tfrac12 nN$. 

For any exponents $p\in [1,\infty)$ and $r\in[1,p]$
and any two nonnegative functions $g_i\in L^p$, 
$\norm{g_1-g_2}_{L^{p}}^p \le C \norm{g_1^r-g_2^r}_{L^{p/r}}
\big(\norm{g_1}_p+\norm{g_2}_p\big)^{p-r}$.
Therefore since $r_1\cdot s_1=p_1$,
\[\norm{\tau_u f_1-f_1}_{p_1}\lesssim n^{1/r_1}\eta^{1/r_1}\norm{f_1}_{p_1} 
\text{ for all $u\in nT$.} \]

Recall the decomposition $f_1=\sum_{k\in\integers} 2^k F_k$,
the associated sets $E_{k}$, and the index $\kappa=\kappa_1$ of Lemma~\ref{lemma:kappa}. 
Set $\tilde E_\kappa = E_{\kappa-1}\cup E_\kappa\cup E_{\kappa+1}$.
For any $u\in G$,
\[
\norm{\tau_u f_1-f_1}_{p_1}
\ge
\tfrac12 2^{\kappa} \,\big|\tau_u E_{\kappa} \setminus \tilde E_\kappa |^{1/p_1},
\]
where $A\setminus B$ denotes the intersection of a set $A$ with the complement of a set $B$.
One of the conclusions of Lemma~\ref{lemma:kappa} is that
$2^{\kappa}|E_{\kappa}|^{1/p_1}\asymp \norm{f_1}_{p_1}$
so long as $\delta$ does not exceed a certain positive constant which depends only on $\bp$,
not on $\eta$.  Therefore for all sufficiently small $\eta$, for every $u\in nT$,
\begin{equation} \label{oldtba}
\big|\tau_u E_{\kappa} \setminus \tilde E_\kappa| \le C(n\eta)^{p_1/r_1}|E_{\kappa}|. 
\end{equation}

The parameter $n$ has not yet been specified. Choose it to satisfy
$\tfrac14\le C(n\eta)^{p_1/r_1}\le\tfrac12$,
where $C$ is the constant in \eqref{oldtba}, assuming as we may that $\eta$ is sufficiently small.
Thus $n\asymp \eta^{-1}$.

For each $u\in nT$,
$\big|\tau_u E_{\kappa} \cap \tilde E_\kappa| \ge \tfrac12 |E_{\kappa}|$, that is,
\[ \int_{E_\kappa} \one_{\tilde E_\kappa}(\tau_{-u} x)\,dx\ge \tfrac12|E_\kappa|.  \]
Therefore
\[|E_\kappa|^{-1}\int_{E_\kappa} \sum_{u\in nT} \one_{\tilde E_\kappa}(\tau_{-u} x)\,dx\ge \tfrac12 |nT|.\]
Consequently there exists $x\in E_{\kappa}$ which satisfies
\[ \tau_{-u}(x)\in \tilde E_\kappa \text{ for at least $\tfrac12|nT|$ elements $u\in nT$};  \]
which is to say that 
$x+\tilde T\subset \tilde E_\kappa$ for some subset $\tilde T\subset nT$ of cardinality
$\ge \tfrac12 |nT|$.  We conclude that
\[ \big|\tilde E_\kappa| \ge \tfrac12|nT| \ge a\eta^{-1}N,  \]
where $a$ is a positive constant which depends only on $\bp$.

Let $A\subset G$ be any subset of cardinality $\le \tfrac12 a\eta^{-1}N$.  Then
\begin{align*}
\norm{f_1}_{L^{p_1}(G\setminus A)}
&\ge \norm{f_1}_{L^{p_1}(\tilde E_\kappa\setminus A)}
\\
&\ge 2^{\kappa-1} \big|\tilde E_\kappa \setminus A\big|^{1/p_1}
\\
&\ge 2^{\kappa-1}2^{-1/p_1} \big|\tilde E_\kappa \big|^{1/p_1}
\\
&\ge c_1 \norm{f_1}_{p_1}
\end{align*}
where $c_1>0$ is independent of $f_1,\eta$; the final inequality follows from \eqref{kappaproperty}.
The contrapositive statement is that for any $A$,
\begin{equation} \label{Aimplication0}
\norm{f_1}_{L^{p_1}(G\setminus A)}^{p_1}< c_1^{p_1}\norm{f_1}_{p_1}^{p_1} \ \Rightarrow\ |A| >
\tfrac{a}2\eta^{-1}N.\end{equation}
It is no loss of generality to assume that $\eta$ is smaller than any specified constant, 
in particular, that $\eta<c_1^{p_1}$ and $\tfrac{a}2 \eta^{-1}\ge 2$.
Thus
\[\norm{f_1}_{L^{p_1}(G\setminus A)}^{p_1}< \eta\norm{f_1}_{p_1}^{p_1} 
\ \Rightarrow\ |A|\ge 2N=2\scriptn(f_2,\eta,p_2),\]
concluding the proof of Lemma~\ref{lemma:doubling}.
\end{proof} 

\begin{proof}[Proof of Theorem~\ref{thm:main}]
The proof of Lemma~\ref{lemma:doubling} applies equally well with the roles of $f_1,f_2$ reversed. 
Therefore if $(f_1,f_2)$ is a $(1-\delta)$--near extremizer,
if $\scriptn(f_2,\eta,p_2)\ge 2$, and if $\delta$ is sufficiently small as a function of $\eta$
and $\bp$ alone, then
\begin{equation} \scriptn(f_2,\eta,p_2)\ge 
2\scriptn(f_1,\eta,p_1)\ge 4\scriptn(f_2,\eta,p_2),  \end{equation}
which is a contradiction.

Thus given $\eta$, if 
$(f_1,f_2)$ is a $(1-\delta)$--near extremizer for sufficiently small $\delta$,
then $\scriptn(f_2,\eta,p_2)= 1$.
The proof of Theorem~\ref{thm:main} is complete.
\end{proof}

Because $\delta$ was constrained only by the bound $\delta\le C\eta^\rho$ in this argument,
we have proved the following quantitative result, which establishes Theorem~\ref{thm:quantitative}.
\begin{proposition}
Let $G$ be any torsion-free discrete group.
Let $p_1,p_2\in(1,\infty)$ and suppose that $q^{-1}=p_1^{-1}+p_2^{-1}-1$
satisfies $q\in (1,\infty)$. There exist $c,\gamma \in\reals^+$
such that if $\norm{f_1*f_2}_q\ge (1-\delta)\norm{f_1}_{p_1}\norm{f_2}_{p_2}$, then 
\[ \frac{\norm{f_i}_\infty}{\norm{f_i}_{p_i}} \ge 1-c\delta^\gamma \]
for both indices $i=1,2$.
\end{proposition}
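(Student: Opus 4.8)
The plan is to reread the proof of Theorem~\ref{thm:main} and make explicit the polynomial dependence of the admissible $\delta$ on $\eta$, a dependence that is present but left unemphasized there. Observe first that the bilinear hypothesis $\norm{f_1*f_2}_q\ge(1-\delta)\norm{f_1}_{p_1}\norm{f_2}_{p_2}$ is exactly the statement that the pair $(f_1,f_2)$ is a $(1-\delta)$--near extremizer, so it feeds directly into Lemma~\ref{lemma:doubling}; the proof of that lemma already passes from this pairing to the trilinear framework of Lemma~\ref{lemma:L1reduction} by duality, choosing an optimal $f_3$ with $p_3=q'$ so that $\sum_j p_j^{-1}=2$ and each $p_j\in(1,\infty)$. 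As in the paper, I may take $f_1,f_2\ge 0$.

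Next I would run the contradiction argument of the proof of Theorem~\ref{thm:main} while tracking constants. Inspecting the proof of Lemma~\ref{lemma:doubling}, the only genuine coupling between $\delta$ and $\eta$ is the single inequality $C\delta^\rho<\eta$, where $C,\rho$ are the constants furnished by Lemma~\ref{lemma:convexity}; every other requirement on $\eta$ there (such as $\eta<c_1^{p_1}$ and $\tfrac a2\eta^{-1}\ge 2$) merely asks that $\eta$ lie below an absolute threshold $\eta_*$ depending only on $\bp$. Fix such an $\eta\in(0,\eta_*)$ and suppose $C\delta^\rho<\eta$. Then Lemma~\ref{lemma:doubling} and its counterpart with the roles of $f_1,f_2$ interchanged both apply; if $\scriptn(f_2,\eta,p_2)\ge 2$ they force
\[ \scriptn(f_2,\eta,p_2)\ge 2\scriptn(f_1,\eta,p_1)\ge 4\scriptn(f_2,\eta,p_2), \]
which is absurd. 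Hence $\scriptn(f_2,\eta,p_2)=1$, and the symmetric argument gives $\scriptn(f_1,\eta,p_1)=1$.

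I would then convert this concentration statement into the desired sup-norm bound and invert the coupling. By the characterization recorded just after the definition of $\scriptn$, for each $i$ there is a point $z_i$ with $\norm{f_i}_{L^{p_i}(G\setminus\set{z_i})}^{p_i}\le\eta\norm{f_i}_{p_i}^{p_i}$, whence $|f_i(z_i)|^{p_i}\ge(1-\eta)\norm{f_i}_{p_i}^{p_i}$ and therefore
\[ \frac{\norm{f_i}_\infty}{\norm{f_i}_{p_i}}\ge\frac{|f_i(z_i)|}{\norm{f_i}_{p_i}}\ge(1-\eta)^{1/p_i}\ge 1-\eta. \]
Given a small $\delta$, choose $\eta$ comparable to the least admissible value, say $\eta=2C\delta^\rho$; this satisfies $C\delta^\rho<\eta$ and, for $\delta$ below a threshold $\delta_*$, also $\eta<\eta_*$. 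Substituting yields $\norm{f_i}_\infty/\norm{f_i}_{p_i}\ge 1-2C\delta^\rho$ for both $i=1,2$, which is the claim with $\gamma=\rho$ and $c=2C$. For $\delta\ge\delta_*$ the quantity $1-c\delta^\gamma$ can be made $\le 0$ by enlarging $c$, so the inequality holds there trivially.

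The work is entirely bookkeeping; the one point to verify carefully is that the chain Lemma~\ref{lemma:convexity} $\to$ Lemma~\ref{lemma:doubling} $\to$ Theorem~\ref{thm:main} couples $\delta$ and $\eta$ through the single polynomial inequality $C\delta^\rho<\eta$ and through nothing else, every remaining smallness demand on $\eta$ being uniform in $\delta$, in $G$, and in $f_1,f_2$. Granting this, the exponent $\gamma=\rho$ and the constant $c$ depend only on $\bp$, exactly as required for a bound uniform over all torsion-free $G$.
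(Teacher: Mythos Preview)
Your proposal is correct and follows exactly the line the paper indicates: the paper's own justification for the proposition is the single sentence preceding it, namely that in the proof of Lemma~\ref{lemma:doubling} (and hence of Theorem~\ref{thm:main}) ``$\delta$ was constrained only by the bound $\delta\le C\eta^\rho$,'' and you have simply written out what that sentence means in detail. Your tracking of the constraints, the conversion from $\scriptn(f_i,\eta,p_i)=1$ to the sup-norm inequality, and the handling of large $\delta$ are all accurate.
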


\section{The Hausdorff-Young inequality}

Let $f\in \ell^p(\integers^d)$. In the case $p\le\tfrac43$,
\begin{equation}\label{HausYoung}
\norm{\widehat{f}}_{p'}^2=\norm{(\widehat{f})^2}_{s'}=\norm{\widehat{f*f}}_{s'}\le\norm{f*f}_s\le\norm{f}_p^2
\cdot \Lambda\Big(\frac{\norm{f}_{\infty}}{\norm{f}_{p}}\Big)^2
\end{equation}
where $s^{-1} = 2p^{-1}-1$ and $\Lambda$ is the same function which appears in Theorem~\ref{thm:quantitative}.

Now consider the case $\tfrac43<p<2$. Let $\theta\in(0,1)$ satisfy $p^{-1}=\frac{\theta}{2}+\frac{3(1-\theta)}{4}$. 
Assume that $\norm{f}_p=1$ and write $f(x)=F(x)e^{i\varphi(x)}$ where $\varphi$ is real-valued and $F(x)\ge 0$ for all $x$.
Define $f_z(x)=F(x)^{L(z)} e^{i\varphi(x)}$ where $L:\complex\to\complex$ is the unique complex affine
function which satisfies $L(\theta)=1$, $\Re(L(z))=p/2$ whenever $\Re(z)=0$,
and $\Re(L(z))=3p/4$ whenever $\Re(z)=1$.

Then $\norm{\widehat{f_z}}_2=\norm{f_z}_2=1$ whenever $\Re(z)=0$ and, by inequality \eqref{HausYoung},
$\norm{\widehat{f_z}}_4\le\norm{f_z}_{4/3}\cdot\Lambda\Big(\frac{\norm{f_z}_{\infty}}{\norm{f_z}_{4/3}}\Big)
=\Lambda\Big(\norm{f}_\infty^{3p/4}\Big)$ whenever $\Re(z)=1$. Therefore, by the three lines lemma,
\[ \norm{\widehat{f}}_{p'}=\norm{\widehat{f_\theta}}_{p'}\le\Lambda\Big(\norm{f}_\infty^{3p/4}\Big)^\theta \]
so for any $f\in\ell^p$,
\begin{equation}
\norm{\widehat{f}}_{p'}\le\norm{f}_p\cdot\widetilde{\Lambda}\Big(\frac{\norm{f}_{\infty}}{\norm{f}_{p}}\Big)
\end{equation}
where $\widetilde{\Lambda}(t) = (\Lambda(t^{3p-4}))^\theta$ has the same character as $\Lambda$.
\qed

\end{document}